\documentclass[12pt]{amsart}

\usepackage[english]{babel} 
\usepackage[utf8]{inputenc}

\usepackage{enumitem}
\usepackage{amsfonts}
\usepackage{amsmath}
\usepackage{amssymb}
\usepackage{amsthm}
\usepackage{mathtools}
\usepackage{stmaryrd}
\usepackage{hyperref}
\usepackage{mathrsfs}
\hypersetup{
    colorlinks=false,
    linkcolor=blue
}
\usepackage[capitalize]{cleveref}
\usepackage{tikz}
\input tikz.tex
\usetikzlibrary{cd}

\usepackage{graphicx}
\usepackage[letterpaper, margin=1in, total={6in, 8in}]{geometry}

\newtheorem{thm}{Theorem}[section]
\newtheorem{lem}[thm]{Lemma}
\newtheorem{cor}[thm]{Corollary}
\newtheorem{prop}[thm]{Proposition}
\newtheorem{rmk}[thm]{Remark}

\theoremstyle{definition}

\theoremstyle{remark}
\newtheorem{rem}[thm]{Remark}

\newcommand{\Z}{\mathbb Z}

\newcommand{\M}{\mathcal{M}}

\renewcommand{\O}{\mathcal{O}}

\newcommand{\Proj}{\operatorname{Proj}}

\newcommand{\FF}{\mathbb{F}}
\newcommand{\PP}{\mathbf{P}}

\newcommand{\PGL}{\operatorname{PGL}}

\renewcommand{\phi}{\varphi}

\usepackage[OT2,T1]{fontenc}
\DeclareSymbolFont{cyrletters}{OT2}{wncyr}{m}{n}
\DeclareMathSymbol{\Sha}{\mathalpha}{cyrletters}{"58}

\bibliographystyle{amsalpha}
\bibliographystyle{amsalpha}

\nocite{*}

\title{A Census of Genus 6 Curves over $\FF_2$}
\author{Yongyuan Huang, Kiran S. Kedlaya, and Jun Bo Lau}
\date{\today}
\thanks{Kedlaya was supported by NSF (grant DMS-2053473), UC San Diego (Warschawski professorship), and the Simons Foundation (Simons Fellowship in Mathematics, 2023--24). Lau is supported by the Simons Foundation grant \#550023 for the Collaboration on Arithmetic Geometry, Number Theory, and
Computation. Computational resources were provided by the Simons Collaboration in Arithmetic Geometry, Number Theory, and Computation and by Boston University's Department of Mathematics and Statistics.}

\begin{document}

\begin{abstract}
We compile a complete list of isomorphism class representatives of curves of genus 6 over $\FF_2$. We use explicit descriptions of canonical curves in each stratum of the Brill--Noether stratification of the moduli space $\mathcal{M}_6$, due to Mukai in the generic case. Our computed value of $\#\mathcal{M}_6(\mathbb{F}_2)$ agrees with the Lefschetz trace formula as recently computed by Bergstrom--Canning--Petersen--Schmitt.
\end{abstract}

\maketitle

\section{Introduction}

For $g>1$, let $\M_g$ denote the moduli stack of curves of genus $g$. 
(All ``curves'' herein are smooth, projective, and geometrically irreducible unless otherwise specified.)
For each prime power $q$, the set $\M_g(\FF_q)$ of $\FF_q$-valued points of $\M_g$ is finite; it is naturally identified with the set of isomorphism classes of curves of genus $g$ over $\FF_q$. 
We equip $\M_g(\FF_q)$ with the measure 
which gives the isomorphism class of a curve $C$ the weight $\frac{1}{\#\mathrm{Aut}(C)}$, as in the Lefschetz trace formula for Deligne--Mumford stacks \cite{Behrend}. (Here we count automorphisms of $C$ over $\FF_q$ itself, not its base extension to an algebraic closure.)

Since $\M_g$ has relative dimension $3g-3$ over $\Z$, it is feasible to compute the set $\M_g(\FF_q)$ for small values of $g$ and $q$, especially for $q=2$ where this has been done previously for $g \leq 5$ \cite{Xarles, Dragutinovic}. In this paper, we extend the computation to the case $g=6$.
\begin{thm} \label{T:main}
We obtain an explicit list of isomorphism class representatives for $\M_6(\FF_2)$: it consists of $72227$ elements, and
\begin{equation} \label{eq:full count over F2}
\#\M_6(\FF_2) = 68615.
\end{equation}
\end{thm}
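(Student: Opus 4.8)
The plan is to realize every genus $6$ curve over $\FF_2$ inside an explicit projective model, to enumerate the relevant parameter spaces exhaustively over $\FF_2$, and to reduce the resulting lists modulo the appropriate symmetry groups. I would organize the enumeration along the Brill--Noether stratification of $\M_6$: a smooth genus $6$ curve is either (i) hyperelliptic (possessing a $g^1_2$), (ii) trigonal (possessing a $g^1_3$ but no $g^1_2$), (iii) a smooth plane quintic (possessing a $g^2_5$), or (iv) generic in Mukai's sense. Each type admits a faithful explicit model, and I would treat each stratum as a separate enumeration problem, assembling the four lists at the end.

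For the non-generic strata I would use classical models: hyperelliptic curves of genus $6$ are cut out by affine equations of the form $y^2 + h(x)y = f(x)$ with bounded-degree $h,f$ over $\FF_2$; trigonal curves sit as divisors of fixed class on the rational normal scrolls in $\PP^5$; and smooth plane quintics are homogeneous quintic forms in $\PP^2$. The generic stratum is governed by Mukai's structure theorem: the canonical model of a Mukai-general genus $6$ curve is a transverse intersection $C = S \cap Q$, where $S = G(2,5) \cap \PP^5$ is a quintic del Pezzo surface cut from the Pl\"ucker-embedded Grassmannian $G(2,5) \subset \PP^9$ by a generic $\PP^5$, and $Q$ is a quadric; here $C \in |-2K_S|$, so that $\deg K_C = 2K_S^2 = 10$ and the genus is indeed $6$.

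For each stratum I would loop over the $\FF_2$-points of the corresponding parameter space --- coefficient vectors of the defining forms, and choices of linear section and quadric in the Mukai case --- discard those whose associated curve is singular or of the wrong genus, and collapse the survivors under the relevant group of $\FF_2$-rational symmetries (the ambient $\PGL$, or $\mathrm{Aut}(S)\cong S_5$ together with the distinct $\FF_2$-forms of $S$ in the Mukai case). I would then compute $\#\mathrm{Aut}(C)$ for each survivor, both to record its stacky weight $1/\#\mathrm{Aut}(C)$ and to use it, alongside cheaply computed zeta-function data, as an isomorphism invariant that separates most classes before any expensive direct isomorphism test is invoked.

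The hardest part is the generic stratum: I must parametrize the $\FF_2$-forms of the degree-$5$ del Pezzo surface and their quadric sections, keep the combined search within feasible size, and correctly detect isomorphisms between curves that need not be induced by ambient linear maps. A related difficulty is that within the Mukai stratum the surface $S$ can degenerate away from a smooth quintic del Pezzo, and such sub-cases must be enumerated on their own; a further source of error is deduplication \emph{across} strata, since a single curve may carry several special linear series and thus arise from more than one model. The decisive safeguard against both omission and double-counting is the independent comparison with the Lefschetz trace formula: the weighted mass $\sum_C 1/\#\mathrm{Aut}(C)$ of the fully assembled list must equal the value $68615$ computed by Bergstr\"om--Canning--Petersen--Schmitt. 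Agreement of this single rational number certifies completeness and correctness of the census, yielding the asserted $72227$ isomorphism classes and $\#\M_6(\FF_2) = 68615$.
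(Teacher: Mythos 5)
There is a genuine gap: your stratification is missing the bielliptic stratum. For genus $6$ the Petri--Mukai classification has \emph{six} mutually exclusive cases, not four: hyperelliptic, bielliptic, smooth plane quintic, trigonal of Maroni invariant $0$, trigonal of Maroni invariant $2$, and generic. A bielliptic curve of genus $6$ is none of your types (i)--(iii), and it is also excluded from Mukai's generic description: it carries a positive-dimensional family of $g^1_4$'s (pullbacks of degree-$2$ classes from the elliptic quotient), so it fails Brill--Noether/Petri generality and its canonical model is \emph{not} a quadric section of a quintic del Pezzo surface. This is a $10$-dimensional locus $\mathcal{B}_6 \subset \M_6$, and over $\FF_2$ it contains $1530$ curves of weighted mass $744$. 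Your enumeration as described would therefore produce a total of about $67871$ rather than $68615$; your trace-formula safeguard would detect that something is wrong but would give you no mechanism to locate or repair the omission. Enumerating this stratum also requires its own method, and in characteristic $2$ the natural Kummer-theoretic description of double covers is unavailable: the paper instead loops over the five elliptic curves $E/\FF_2$, over effective divisors $D$ of degree $g-1=5$ (by Riemann--Hurwitz with wild ramification the ramification divisor of $C \to E$ is $2D$), and over the index-$2$ quotients of the corresponding ray class groups, keeping the resulting covers of genus $6$.

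A secondary issue is your claim that agreement of the single weighted sum with the Bergstr\"om--Canning--Petersen--Schmitt value ``certifies completeness and correctness.'' By itself it does not: duplicates in your list inflate the computed mass while omissions deflate it, and misreported automorphism orders push it in either direction, so errors can cancel. To turn the comparison into a proof you need the one-sided verifications the paper makes explicit: that no two tabulated curves are isomorphic, and that each reported $\#\mathrm{Aut}(C)$ is no smaller than the true order. These make the computed sum a certified lower bound for $\#\M_6(\FF_2)$, and only then does equality with the trace-formula value force completeness of the census. The remainder of your plan does track the paper: the Mukai model $C \in |-2K_S|$ with $S$ a quintic del Pezzo, the need to include degenerate sections of $\operatorname{Gr}(2,5)$ (the paper keeps all $17$ irreducible hyperplane sections with singular locus of codimension greater than $1$, only $7$ of which are smooth), the zeta-function hashing before expensive isomorphism tests, and the scroll picture for trigonal curves, which subsumes the paper's split by Maroni invariant into $(3,4)$-curves in $\PP^1 \times \PP^1$ and $(2,1) \cap (1,3)$ complete intersections in $\PP^1 \times \PP^2$.
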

A list of isomorphism class representatives, as well as the SageMath \cite{SageMath} and Magma \cite{Magma} code used to generate it, can be found at
\begin{center}
\url{https://github.com/junbolau/genus-6}.
\end{center}
The list is also available via the table of isogeny classes of abelian varieties over finite fields in LMFDB \cite{LMFDB}. We observe that 38327 of the 164937 isogeny classes of abelian sixfolds over $\FF_2$ contain at least one Jacobian, representing all 20 of the possible Newton polygons, and that the maximum number of Jacobians in a single isogeny class is 20.

Our approach to Theorem~\ref{T:main} follows the partial census carried out in \cite{Kedlaya}: for each stratum in the Brill--Noether stratification of $\M_6$, we use the descriptions of general canonical curves in each stratum (due to Mukai \cite{Mukai} for the generic stratum) to construct a covering set for the isomorphism classes of curves over $\FF_2$ in that stratum. We then make extensive use of Magma's implementation of function fields to identify isomorphic curves and compute automorphism groups; the only groups that occur are
\[
C_1, C_2, C_3, C_4, C_2 \times C_2, C_5, C_6, S_3, C_{10}, D_5, D_{10}, A_5.
\]

We have two main applications in mind for Theorem~\ref{T:main}. One is to identify curves with a given zeta function; for example, the following statements can now be verified by database queries in LMFDB.
\begin{itemize}
    \item The maximum number of $\FF_2$-points on a curve of genus 6 is 10, achieved by exactly two curves \cite{Rigato}.
    \item There are 70 supersingular curves of genus 6 over $\FF_2$, with 28 distinct zeta functions.
    \item There is no curve of genus 6 over $\FF_2$ having any of the three zeta functions listed in the proof of \cite[Theorem~5.1]{FaberGranthamHowe}; that result states that the maximum gonality of a curve of genus 6 over $\FF_2$ is 6.
    \item There is no curve $C$ of genus 6 over $\FF_2$ with $\#C(\FF_{2^4}) = 0$. This recovers the previous assertion as well as a nonexistence statement made in \cite[\S 6]{Kedlaya-rel1}.
    \item There is a unique curve $C$ of genus 6 over $\FF_2$ with $(\#C(\FF_{2^i}))_{i=1}^6 = (0,0,0,20,15,90)$ \cite[Lemma 10.2]{Kedlaya-rel2}.
    \item As reported in \cite[Table~1]{Kedlaya}, there are 52 curves with zeta functions matching one of the options in \cite[Table~2]{Kedlaya}. The latter contains (as shown in \cite{Kedlaya-rel2}) every curve of genus 6 admitting an \'etale double cover with trivial relative class group.
\end{itemize}

The other intended application is to the computation of the rational cohomology of $\M_g$. 
There has been much progress in this direction recently;
for example, it is known that $\#\M_g(\FF_q)$ is a polynomial in $q$ for each $g \leq 6$ \cite[Corollary~1.6]{CanningLarson-Mg}.
More precisely, this follows from the Lefschetz trace formula and the fact that in these cases, the rational cohomology of $\M_g$ can be computed using the tautological Chow ring. The latter can be computed using the Sage package described in \cite{admcycles}; by so doing, one can recover the explicit polynomials for $g=4$ (see \cite[\S 4]{BergstromTommasi} or \cite[Theorem 1.5]{BergstromFaberPayne}
 for $g=4$ and \cite{BergstromEtAl} for $g=5,6$).
The resulting formula for $g=6, q=2$ agrees with \eqref{eq:full count over F2}; while Theorem~\ref{T:main} is in principle logically independent of this agreement, admitting it allows for an alternate justification of the correctness of our result (see \S\ref{sec:completeness}).

On the other hand, a tabulation of curves of genus $g$ also yields, for every positive integer $n$, a point count for the stack $\M_{g,n}$ of $n$-pointed genus $g$ curves (where the points are distinct and distinguishable) or more generally any quotient of $\M_{g,n}$ by a subgroup of $S_n$.
For example, Theorem~\ref{T:main} yields the following.
\begin{cor} \label{cor:count points on marked}
We have
\begin{align}
    \#\M_{6,1}(\FF_2) &= 223317, \\
    \#(\M_{6,2}/S_2)(\FF_2) &= 471210, \\
    \#\M_{6,2}(\FF_2) &= 650838.
\end{align}
\end{cor}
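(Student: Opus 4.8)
The plan is to derive all three counts from the explicit list of isomorphism class representatives furnished by Theorem~\ref{T:main}, using the groupoid (stacky) interpretation of the point counts. Because $\M_{6,n}$ and $\M_{6,n}/S_n$ are Deligne--Mumford stacks, the relevant count is the total mass for the measure fixed in the introduction, which weights each object by $1/\#\mathrm{Aut}$; so the task is to reorganize each count as a sum over the unmarked isomorphism classes of $\M_6(\FF_2)$, inserting the correct configuration data for the marked points.

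First I would record the general mechanism. Fix a curve $C/\FF_2$ with $G = \mathrm{Aut}(C)$. The subgroupoid of $\M_{6,n}(\FF_2)$ lying over the class of $C$ is the action groupoid of $G$ acting on the set of admissible configurations of marked points on $C$; by orbit--stabilizer its mass equals $\#(\text{configurations})/\#G$. Summing over the classes in the census gives
\[
\#\M_{6,n}(\FF_2) = \sum_{[C]} \frac{1}{\#\mathrm{Aut}(C)}\,\#\{\text{ordered $n$-tuples of distinct $\FF_2$-points of }C\}.
\]
For $n=1$ the configuration count is $\#C(\FF_2)$, and for $n=2$ it is $\#C(\FF_2)\big(\#C(\FF_2)-1\big)$; these yield the first and third formulas of the corollary.

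The symmetric quotient requires more care, and this is the main subtlety. An $\FF_2$-point of $\M_{6,2}/S_2$ is \emph{not} simply a curve together with an unordered pair of $\FF_2$-rational points: since the quotient is stacky, Galois is permitted to interchange the two marked points, so such a point corresponds to a curve $C$ equipped with an $\FF_2$-rational reduced effective divisor of degree $2$. Such a divisor is either a sum $P+Q$ of two distinct $\FF_2$-rational points, or a single closed point of degree $2$, that is, a conjugate pair defined over $\FF_4$ (these are automatically reduced, as $\FF_4/\FF_2$ is separable). Hence the configuration count is $\binom{\#C(\FF_2)}{2} + \tfrac12\big(\#C(\FF_4) - \#C(\FF_2)\big)$, and
\[
\#(\M_{6,2}/S_2)(\FF_2) = \sum_{[C]} \frac{1}{\#\mathrm{Aut}(C)}\left(\binom{\#C(\FF_2)}{2} + \frac{\#C(\FF_4) - \#C(\FF_2)}{2}\right).
\]

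Finally I would evaluate these sums mechanically. The values $\#C(\FF_2)$ and $\#C(\FF_4)$ are read off from the zeta functions already computed for each curve in the census, and $\#\mathrm{Aut}(C)$ is likewise known, with only the groups listed after Theorem~\ref{T:main} occurring; the three numbers then follow from a direct summation over the $72227$ representatives. The only genuine obstacle is conceptual rather than arithmetic: correctly identifying the groupoid of $\FF_2$-points of the stacky quotient $\M_{6,2}/S_2$, and in particular the contribution of degree-$2$ closed points. This term is precisely what breaks the naive expectation that $\#(\M_{6,2}/S_2)(\FF_2)$ should equal half of $\#\M_{6,2}(\FF_2)$, and as a consistency check one verifies that $2\cdot 471210 - 650838 = 291582$ equals $\sum_{[C]}\big(\#C(\FF_4)-\#C(\FF_2)\big)/\#\mathrm{Aut}(C)$, twice the weighted count of degree-$2$ closed points.
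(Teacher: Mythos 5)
Your proposal is correct and follows essentially the same route as the paper, which deduces the corollary by exactly this kind of weighted summation over the census of Theorem~\ref{T:main} (the paper leaves the orbit--stabilizer bookkeeping implicit, merely noting that a tabulation of curves yields point counts for $\M_{g,n}$ and for quotients by subgroups of $S_n$). Your identification of $\FF_2$-points of $\M_{6,2}/S_2$ with curves carrying a reduced $\FF_2$-rational divisor of degree $2$, giving the term $\binom{\#C(\FF_2)}{2} + \tfrac{1}{2}\left(\#C(\FF_4)-\#C(\FF_2)\right)$, is precisely the correct (and only subtle) ingredient.
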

In these cases, \cite[Theorem~1.4]{CanningLarson-Mg} implies that the point count over $\FF_q$ is a polynomial in $q$, but as of now the computation of these polynomials remains infeasible using \cite{admcycles}. Our computation provides one linear constraint on the coefficients of the polynomial, and thus reduces by one the number of rational cohomology groups that need to be computed in order to determine the polynomial. (One could adapt our methods to perform a census over $\FF_3$ and thus obtain a second linear constraint; we do not plan to do this.)

We observe that \cite{Kedlaya} also includes a partial census of genus 7 curves over $\FF_2$, which it should be possible to similarly upgrade to a full census. A polynomial formula for $\#\overline{\M}_7(\FF_q)$, where $\overline{\M}_7$ denotes the moduli stack of stable curves of genus 7, will appear in \cite{BergstromEtAl}; combining this formula with Corollary~\ref{cor:count points on marked} and known polynomial formulas for $\#M_{g,n}(\FF_q)$ for $g \leq 5$ will yield the value of $\#\M_7(\FF_2)$. 

It is unclear whether one can push this further, say to genus 8 or even 9.
On one hand, the expected number of curves (approximately $2^{3g-3}$ in genus $g$)
is manageable, and we again have explicit descriptions of canonical curves in these genera \cite{IdeMukai, Mukai2, Mukai3}. On the other hand, these descriptions are currently only available over an algebraically closed based field; moreover, while we expect a polynomial formula for $\#\overline{\M}_7(\FF_q)$ to be obtained in \cite{BergstromEtAl}, it is unclear whether
$\#\overline{\M}_g(\FF_q)$ admits a polynomial formula for $g=8$ or $g=9$, and even more unclear whether these quantities can be computed using current technology.

\section{The Brill--Noether stratification of \texorpdfstring{$\M_6$}{M6}}

We first recall some relevant terminology and facts about $\M_6$.
Throughout this discussion, let $C$ be a curve of genus $g$ over a finite field $k$ and let $\overline{k}$ be an algebraic closure of $k$.
Let $K$ be the canonical divisor on $C$, and $|K|$ be the canonical linear system. 

A \emph{$g^r_d$} on $C$ is a linear system of dimension $r$ of degree $d$, which if basepoint-free defines a degree $d$ morphism $C\to\PP^r$. We call $C$ \textit{hyperelliptic} if there is a finite morphism $C\to\PP^1$ of degree 2, or equivalently if $C$ admits a $g^1_2$ (which is automatically basepoint-free if $g\geq 1$). We call the morphism $\iota\colon C\to\PP^{g-1}_k$ defined by $|K|$ the \textit{canonical morphism}. For $g\geq 2$, $|K|$ is very ample if and only if $C$ is not hyperelliptic. Thus if $C$ is nonhyperelliptic, the canonical morphism $\iota$ is an embedding, and if $C$ is hyperelliptic, $\iota$ factors as a degree $2$ morphism $C\to\PP^1$ followed by the Veronese embedding $\PP^1_k\to\PP^{g-1}_k$. In particular, every genus two curve is hyperelliptic.    

For $g\geq 4$, we say $C$ is \textit{trigonal} if $C$ admits a $g^1_3$ but not a $g^1_2$; let $\mathcal{T}_g$ be the stack of smooth trigonal curves. The moduli space $\mathcal{T}_g$ admits a stratification by locally closed substacks $\mathcal{T}_{g,n}$ where $n$ runs over integers with $0 \leq n \leq \frac{g+2}{3}$ and $n \equiv g \pmod{2}$. The integer $n$ denotes the 
\textit{Maroni invariant} of a trigonal curve $C$, defined as the unique nonnegative integer $n$ such that the trigonal cover $C\to\PP^1$ factors through a closed embedding 
\[
C\to \mathbf{F}_n :=\PP_{\PP^1}(\O_{\PP^1}\oplus\O(n)_{\PP^1})
\]
in such a way that the structure map $\mathbf{F}_n\to\PP^1$ restricts to the trigonal cover $C\to\PP^1$.
Note that $\mathbf{F}_n$ is ruled by the fibers of the structure map;
it is in fact the $n$th Hirzebruch surface (which for $n=0$ degenerates to $\PP^1_{k}\times_{k}\PP^1_{k}$), and can also be represented as an $(n,1)$-hypersurface in $\PP^1_k\times_{k}\PP^2_k$.

Last but not least, we say $C$ is \textit{bielliptic} if it admits a degree $2$ map to a genus $1$ curve over $k$. Any such map gives rise to a $g^1_4$, but not conversely.

Due to work of Petri and Mukai, we have the following classification of genus $6$ curves over finite fields. 
\begin{thm}\label{genus6}
    Let $C$ be a curve of genus $6$ over a finite field $k$. Then one (and only one) of the following holds. 
    \begin{enumerate}[label=(\arabic*)]
        \item The curve $C$ is hyperelliptic. 
        \item The curve $C$ is bielliptic. 
        \item The curve $C$ occurs as a smooth quintic in $\PP^2_k$.
        \item The curve $C$ is trigonal of Maroni invariant $0$. In this case, $C$ occurs as a curve of bidegree $(3,4)$ in $\PP^1_k\times\PP^1_k$.
        \item The curve $C$ is trigonal of Maroni invariant $2$. In this case, $C$ occurs as a complete intersection of type $(2,1)\cap (1,3)$ in $\PP^1_k\times_k\PP^2_k$, where the $(2,1)$-hyperplane is isomorphic to the Hirzebruch surface $\mathbf{F}_2$.  
        \item The curve $C$ occurs as a transverse intersection of four hyperplanes, a quadric hypersurface, and the $6$-dimensional Grassmannian $\operatorname{Gr}(2,5)$ in $\PP_k^9$. 
    \end{enumerate}
    \begin{proof}
        Most of the above follows from Petri's theorem. For the details in the last case, see \cite[Theorem 3.1]{Kedlaya}.
    \end{proof}
\end{thm}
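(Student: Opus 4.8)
The plan is to classify $C$ through its canonical model and Petri's analysis of the corresponding homogeneous ideal, stripping off the special strata one at a time. First I would dispose of the hyperelliptic curves: since $|K|$ is very ample precisely when $C$ is nonhyperelliptic, either $C$ is hyperelliptic and we are in case (1), or the canonical morphism $\iota\colon C\to\PP^5_k$ embeds $C$ as a degree-$10$ curve. For the nonhyperelliptic curves I would invoke Petri's theorem: the homogeneous ideal of the canonical curve is generated by quadrics \emph{unless} $C$ is trigonal or a smooth plane quintic (the latter exception occurring only for $g=6$, exactly our situation), and in either exceptional case the quadrics through $C$ no longer cut out $C$ but rather a surface.

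Next I would read off this base surface. A Riemann--Roch computation, together with the projective normality of the canonical embedding, shows that the quadrics through $C$ form a $6$-dimensional space. When $C$ is a plane quintic their common zero locus is the Veronese surface (the $2$-uple embedding of $\PP^2_k$ in $\PP^5_k$), and since the $g^2_5$ is unique it is Galois-stable and defined over $k$, yielding the plane model of case (3). When $C$ is trigonal the base locus is a two-dimensional rational normal scroll whose type is recorded by the Maroni invariant $n$; for $g=6$ the constraints $0\le n\le\tfrac{g+2}{3}$ and $n\equiv g\pmod 2$ force $n\in\{0,2\}$. A short adjunction computation then identifies $C$ with a curve of bidegree $(3,4)$ on $\mathbf{F}_0=\PP^1_k\times\PP^1_k$ (case (4)) or with a $(1,3)$-divisor on the $(2,1)$-hypersurface $\mathbf{F}_2\subset\PP^1_k\times\PP^2_k$ (case (5)); uniqueness of the $g^1_3$ for $g\ge 5$ guarantees that the scroll and these models descend to $k$.

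There remain the nonhyperelliptic, nontrigonal curves that are not plane quintics, for which Petri's theorem guarantees that the quadrics generate the ideal. Here I would appeal to Mukai's theorem: such a $C$ is a quadric section of a quintic del Pezzo surface $S$, and because the anticanonically embedded $S$ is a codimension-$4$ linear section $\operatorname{Gr}(2,5)\cap\PP^5_k$, this exhibits $C=\operatorname{Gr}(2,5)\cap H_1\cap\cdots\cap H_4\cap Q$ as in case (6)---provided $S$ is a genuine smooth del Pezzo. The sole way this can fail is that $C$ is bielliptic, in which case the associated surface degenerates and the curve falls into case (2). Pinning down the bielliptic curves as exactly this exceptional locus, and carrying out the del Pezzo/Grassmannian construction over the non-closed field $k$ rather than $\overline{k}$, is precisely what is deferred to \cite[Theorem 3.1]{Kedlaya}; here the rigidity of the quintic del Pezzo (hence Galois-descent of $S$) and the good arithmetic of such surfaces over finite fields---in particular the existence of rational points---are what make the descent over $k$ go through.

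Finally I would check that the six classes are mutually exclusive by assembling the discrete invariants already used. Gonality separates the strata into three groups: the hyperelliptic curves (gonality $2$, case (1)), the trigonal curves (gonality $3$, cases (4) and (5)), and the remaining curves (gonality $4$, cases (2), (3), (6)); the Castelnuovo--Severi inequality
\[
g(C)\le d_1\,g(C_1)+d_2\,g(C_2)+(d_1-1)(d_2-1)
\]
confirms this grouping is consistent---for example a curve carrying both a degree-$3$ map to $\PP^1$ and a degree-$2$ map to a genus-$1$ curve (independent since $\gcd(2,3)=1$) would satisfy $6\le 4$, so no genus-$6$ curve is simultaneously trigonal and bielliptic, and similarly none is both hyperelliptic and bielliptic. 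Within the trigonal locus the Maroni invariant distinguishes cases (4) and (5), while among the gonality-$4$ curves Petri's theorem separates the plane quintics (case (3), whose canonical ideal genuinely requires cubics) from the quadric-generated curves (cases (2) and (6)), the latter two being distinguished by whether the Mukai surface is an honest del Pezzo. I expect the last case---the del Pezzo/Grassmannian realization of case (6), the identification of the bielliptic exceptional locus in case (2), and the descent of all of these from $\overline{k}$ to the finite field $k$---to be the main obstacle, the hyperelliptic, trigonal, and plane-quintic strata being classical Petri theory.
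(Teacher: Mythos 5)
Your proposal is correct and follows essentially the same route as the paper, whose entire proof is a citation: Petri's theorem for the hyperelliptic/trigonal/plane-quintic strata and Mukai's description, with the descent to the finite field $k$ deferred to \cite[Theorem 3.1]{Kedlaya}, exactly as you structure it (your stratification and exclusivity argument matches the Penev--Vakil picture the paper invokes). The additional details you supply---the $6$-dimensional space of quadrics, the scroll/Veronese dichotomy, the Maroni constraint forcing $n\in\{0,2\}$, and the Castelnuovo--Severi exclusions---are standard and sound, so your write-up is a faithful expansion of the proof the paper leaves implicit.
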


\begin{rem}
Curves as in case 6 of Theorem \ref{genus6} are known as \textit{Brill--Noether-general} curves (c.f. \cite[Theorem 4.1]{Penev-Vakil}). We will henceforth refer to them as \textit{generic} curves of genus 6.
\end{rem}

\begin{rem}
    As stated in \cite[Theorem 4.1]{Penev-Vakil}, the space $\M_6$ can be stratified into locally closed substacks consisting of the loci corresponding to each of the cases in Theorem \ref{genus6}. In particular:
    \begin{enumerate}[label=(\arabic*)]
        \item The locus $\mathcal{H}_6$ of hyperelliptic curves of genus $6$ has dimension 11. 
        \item The locus $\mathcal{B}_6$ of bielliptic curves of genus $6$ has dimension 10. 
        \item The locus $\mathcal{Q}_6$ of smooth plane quintic curves of genus $6$ has dimension 12.
        \item The locus $\mathcal{T}_{6,0}$ of trigonal curves of genus $6$ with  Maroni invariant $0$ has dimension 13.
        \item The locus $\mathcal{T}_{6,2}$ of trigonal curves of genus $6$ with Maroni invariant 2 has dimension 12.
        \item The locus $\M_6^{\operatorname{BN}}$ of generic curves of genus $6$ has dimension 15 (it is open in $\M_6$).
    \end{enumerate}
\end{rem}

\section{Tabulation of data}
\label{sec:tabulation}

We begin by recording a few convenient facts that allow us to more efficiently search and filter putative genus 6 curves. 
\begin{enumerate}
    \item Using an analogue of the explicit formula from analytic number theory, Serre (c.f. \cite[Theorem 5.3.2, 7.1 Table 1]{Serre} shows that a curve of genus 6 has at most 10 $\FF_2$-points, which is a notable refinement from the Hasse-Weil bound 15. 
    \item LMFDB contains a complete list of isogeny classes of abelian varieties of dimension 6 over $\FF_2$ and their corresponding $L$-polynomials. Using the fact that a curve and its Jacobian have the same Weil polynomial, we recover a finite set containing the tuple $(\#C(\FF_{2^i}))_{i=1}^6$ for any curve $C$ of genus 6 over $\FF_2$. The relevant code written in SageMath can be found in \verb|./Census/Shared/weil_poly_utils.sage| in our code base (taken from \cite{Kedlaya}). We make use of this list when it would presumptively speed up our tabulation process. 
\end{enumerate}

In several cases, we use the \emph{orbit lookup trees} introduced by the second author (see \cite[Appendix A]{Kedlaya}) to efficiently compute orbit representatives for the action of a group $G$ on $k$-element subsets of a finite set $S$ equipped with a left $G$-action for small values of $k$. The implementation of this algorithm in SageMath can be found in \verb|./Census/Shared/orbits.sage| in our code base (again taken from \cite{Kedlaya}).

To simplify the code somewhat, initially we only construct a finite set of genus 6 curves 
over $\FF_2$ which meets every isomorphism and is ``not too redundant''. We use a separate postprocessing step to remove redundancies (see \S\ref{subsec:postprocessing}).

\subsection{Hyperelliptic curves}
Here we follow the strategy used in \cite{Xarles,Dragutinovic} where the enumerations were done in cases $g = 4,5$. This strategy is adapted to characteristic 2; for a good approach in odd characteristic, see \cite{Howe}.

Any hyperelliptic curve of genus $g$ over $\FF_2$ can be represented as $y^2 + q(x)y = p(x)$ with $p(x),q(x) \in \FF_2[x]$ and $2g + 1 \leq \max \{ 2 \deg(q(x)) , \deg(p(x))\} \leq 2g + 2$. Xarles presented a method to determine the isomorphism class of a hyperelliptic curve using the action of $\PGL_2(\FF_2)$ on $\FF_2[x]_{\leq g+1}$.

\begin{lem}{(\cite{Xarles}, Lemma 1)}
    Let $H_1,H_2$ be hyperelliptic curves represented by the equations $y^2 + q_i(x)y = p_i(x)$ for $i \in \{1,2\}$ respectively as above. Suppose that $H_1 \cong H_2$. Then there exists $A \in \PGL_2(\FF_2)$ such that $q_2(x) = \psi_{g+1}(A)(q_1(x))$, where the action of $\PGL_2(\FF_2)$ on $\FF_2[x]_{\leq n}$ is given by
    \[
    \psi_{n}(A)(q(x)) := (cx+d)^n q\left(\frac{ax+b}{cx+d}\right), \qquad A = \begin{pmatrix}
        a & b \\ c & d
    \end{pmatrix} \in \PGL_2(\FF_2).
    \]
\end{lem}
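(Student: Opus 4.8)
The plan is to combine the uniqueness of the hyperelliptic pencil with a direct analysis of how the Artin--Schreier model $y^2 + q(x)y = p(x)$ transforms under the two kinds of moves available to an isomorphism: a change of coordinate on the base $\PP^1$ and a fibrewise shift of $y$. First I would extract the intrinsic input. By the discussion preceding the lemma, for $g \geq 2$ the canonical morphism of a hyperelliptic curve factors as the degree-$2$ cover $\pi\colon H \to \PP^1$ followed by the Veronese embedding; hence $\pi$ is recovered from $|K|$ and is preserved by every isomorphism, so the $g^1_2$ is unique. Consequently any isomorphism $\phi\colon H_1 \xrightarrow{\sim} H_2$ descends to the base: there is an $A \in \PGL_2(\FF_2)$ with $\pi_2 \circ \phi = A \circ \pi_1$, so that $\phi$ identifies the hyperelliptic coordinates $x_1$ and $x_2$ via the fractional linear map attached to $A$.

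Next I would pass to a homogeneous model in order to see the weight. Writing $q(x) = Q(1,x)$ for a binary form $Q(X_0,X_1)$ of degree $g+1$ and $p(x) = P(1,x)$ for a form $P$ of degree $2g+2$, one realizes $H_i$ inside the weighted space $\PP(1,1,g+1)$ (coordinates $X_0,X_1$ of weight $1$ and $Y$ of weight $g+1$) as $Y^2 + Q_i Y = P_i$. The key point is that the coefficient of $Y$, namely $Q$, transforms transparently. A base change by $A$ acts on the degree-$(g+1)$ form $Q$ through the standard representation of $\PGL_2$, which upon dehomogenization is exactly $\psi_{g+1}(A)$; in particular the swap $x \mapsto 1/x$ is handled by substituting $x = 1/u$, $y = v/u^{g+1}$ and clearing denominators, which replaces $q(x)$ by $u^{g+1}q(1/u) = \psi_{g+1}(S)(q)$ for $S = \bigl(\begin{smallmatrix} 0 & 1 \\ 1 & 0 \end{smallmatrix}\bigr)$, confirming that $q$ genuinely behaves as a weight-$(g+1)$ form rather than merely as a polynomial in the affine chart.

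Finally I would dispose of the fibrewise ambiguity and assemble. A fibrewise automorphism $Y \mapsto \alpha Y + R$ with $\alpha \in \FF_2^\times$ and $R$ a form of degree $g+1$ amounts to $y \mapsto y + r(x)$ (since $\alpha = 1$ over $\FF_2$), and this sends $y^2 + qy - p$ to $y^2 + qy - (p + r^2 + qr)$, leaving $q$ untouched while only altering $p$. Since $\phi$ respects $\pi$, it factors as such a fibrewise move followed by the base change $A$; as the $Y$-coefficient is insensitive to the shift and transforms under $A$ by $\psi_{g+1}(A)$, comparing the two equations gives $q_2 = \psi_{g+1}(A)(q_1)$.

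The main obstacle is the middle step: establishing that $q$ is a true invariant of weight exactly $g+1$, transforming under the full $\PGL_2(\FF_2)$ (crucially under $x \mapsto 1/x$), rather than an artifact of the chosen affine model. This rests on the weighted compactification together with the elementary but essential observation that the $p$-ambiguity from fibrewise shifts does not disturb $q$; the uniqueness of the hyperelliptic pencil is what guarantees that there are no further isomorphisms beyond these two types to account for.
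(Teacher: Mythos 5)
The paper does not actually prove this lemma --- it is quoted directly from Xarles, and your argument is essentially the proof of Lemma~1 there: uniqueness of the $g^1_2$ (via the factorization of the canonical map) forces any isomorphism to take the form $(x,y)\mapsto\left(\frac{ax+b}{cx+d},\frac{y+r(x)}{(cx+d)^{g+1}}\right)$, under which the coefficient of $y$ transforms by $\psi_{g+1}(A)$ and is unaffected by the Artin--Schreier shift $r$, exactly as you compute (in characteristic $2$ the shift changes only $p$, by $r^2+qr$). Your proof is correct; the single step you assert rather than derive --- that once the base change is split off, the fibrewise part is $y\mapsto\alpha y+r(x)$ with $\alpha$ a nonzero \emph{constant} (hence $\alpha=1$ over $\FF_2$), i.e.\ that the curve isomorphism really extends to the ambient $\PP(1,1,g+1)$ --- is cleanly justified by Riemann--Roch: $\phi$ identifies $L\bigl((g+1)\pi_2^*\infty\bigr)$ with $L\bigl((g+1)\pi_1^*\infty\bigr)$, each of dimension $g+3$ with basis $1,x,\dots,x^{g+1},y$, so the coefficient of $y_1$ in $\phi^*y_2$ is a scalar, nonzero because $\phi^*y_2\notin\FF_2(x_1)$.
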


We compute orbit representatives for this action, test for pairwise isomorphism, and record the resulting curves. The implementation of this method can be found in \verb|./Census/hyperelliptic/| in our code base. 

\subsection{Bielliptic curves}

Here we follow the strategy used in \cite{Kedlaya} to enumerate bielliptic curves of genus 7;
therein bielliptic curves of genus 6 were ruled out without any enumeration, but the enumeration strategy is genus-independent.

By Riemann--Hurwitz plus the fact that double covers in characteristic 2 have only wild ramification, the map from a bielliptic curve $C$ of genus 6 to its elliptic quotient $E$ has ramification divisor of the form $2D$ where $D$ is an effective divisor of degree $g-1 = 5$ on $E$.
We may thus generate all bielliptic curves by enumerating over a set of isomorphism class representatives of elliptic curves $E$ over $\FF_2$ (there are 5 of them). For each $E$, we use Magma to enumerate over all effective divisors $D$ of degree 5. For each $D$, we enumerate over all order-2 quotients of the ray class group of $D$, form the corresponding abelian extension, then check to see if it indeed has genus 6 (and if so record the resulting curve). The implementation of this method can be found in \verb|./Census/bielliptic/| in our code base. 

\subsection{Smooth plane quintic curves}

Since the space of quintic polynomials over $\FF_2$ has dimension $\binom{7}{2} = 21$, it is not necessary to reduce this space using the action of $\mathrm{GL}(3, \FF_2)$; we simplify identify all of the nonsingular polynomials and record the resulting smooth curves. The implementation of this method can be found in \verb|./Census/plane_quintic/| in our code base.

\subsection{Trigonal curves of Maroni invariant 0}

In this case, we are looking for $(3,4)$-curves in $\PP^1 \times \PP^1$, and we follow the strategy used in \cite{Kedlaya}. We first compute orbit representatives for the action of $\PGL_2(\FF_2) \times \PGL_2(\FF_2)$ on \emph{all} subsets of $(\PP^1 \times \PP^1)(\FF_2)$. For each orbit representative, we identify the $(3,4)$-polynomials which vanish on the points in the chosen subset and do not vanish elsewhere; since we are working over $\FF_2$, this is an affine subspace of the vector space of $(3,4)$-polynomials.
We then pick out the nonsingular polynomials and record the resulting smooth curves. The implementation of this method can be found in \verb|./Census/trigonal_maroni_0/| in our code base.

\subsection{Trigonal curves of Maroni invariant 2}

In this case, we are looking for complete intersections of type $(2,1) \cap (1,3)$ in $\PP^1 \times \PP^2$, specifically, if we write $\PP^1 \times \PP^2 = \Proj \FF_2[x_0, x_1; y_0, y_1, y_2]$, then we may take the $(2,1)$-hypersurface $X_1$ to be
\begin{equation}\label{F2}
(x_0^2 + x_1^2) y_1 + x_0 x_1 y_2 = 0:
\end{equation}
over a field of characteristic 0, the equation of the Hirzebruch surface $\mathbf{F}_n$ is isomorphic to the hypersurface defined by $x_0^ny_1-x_1^ny_2$ in $\PP^1\times\PP^2$ (c.f. \cite{Huybrechts} Exercise 2.4.5), and we obtain \eqref{F2} by taking $n=2$ and making a change of variables to get an equation with smooth mod-2 reduction.

The hypersurface (\ref{F2}) is fixed by the group $G$ generated by the three involutions
\[
x_0 \leftrightarrow x_1; \qquad
y_0 \mapsto y_0+y_1; \qquad
y_0 \mapsto y_0+y_2.
\]
We now proceed as in the previous case: we compute orbit representatives for the action of $G$ on all subsets of $X_1$; for each orbit representative, we identify the $(1,3)$-polynomials which vanish on the points in the chosen subset and do not vanish elsewhere; we then pick out the nonsingular polynomials and record the resulting smooth curves. The implementation of this method can be found in \verb|./Census/trigonal_maroni_2/| in our code base.

\subsection{Generic curves}

Here we follow a modified version of the strategy used in \cite{Kedlaya}. This is the most computationally intensive case.
We first identify orbit representatives for the action of $\mathrm{PGL}_5(\FF_2)$ on 4-tuples of points in ${\PP^9}^{\vee}(\FF_2)$. Each 4-tuple defines 4 linear forms and hence 4 hyperplanes on $\PP^9$; we next compute representatives for the linear action of 
$\mathrm{PGL}_4(\FF_2)$ on such 4-tuples preserving the intersection of the 4 hyperplanes.
We record all cases where the intersection of the 4 hyperplanes with the Grassmannian $\operatorname{Gr}(2,5)$ is irreducible with singular locus of codimension greater than 1; there are 17 such intersections, of which 7 are smooth, corresponding to the fact that quintic del Pezzo surfaces over a finite field are indexed by conjugacy classes in $S_5$ (e.g., see \cite[Table 1]{Trepalin}).

For each of these 17 intersections, we first record all the quadrics defined on the span of the 4 linear forms, which reduces the enumeration of quadrics from a $\binom{10}{2}=55$ dimensional space to a $\binom{7}{2}=21$ dimensional space); we then record the cases where the intersection is smooth of genus 6.  The implementation of this method can be found in \verb|./Census/generic/| in our code base. 

\subsection{Postprocessing} \label{subsec:postprocessing}

For each stratum, the computation described above yields a finite set of curves of genus 6 over $\FF_2$ lying in that stratum and including at least one representative of each isomorphism class. It then remains to remove redundant representatives.

For this, we first hash the curves by their zeta function, or equivalently by the function $C \mapsto (\#C(\FF_{2^i})_{i=1}^6)$. Within each hash class, we use Magma to construct the function field of each curve, then use \verb+Isomorphisms+ to test whether any pair of curves is isomorphic. Once this is done, we compute the automorphism group of each curve that remains.

For the record, we mention some bugs in Magma that we had to work around.
\begin{itemize}
    \item For two function fields, the function \verb+Isomorphisms+ returns a list of all isomorphisms between the two fields, but in some cases with repeated entries. This causes \verb+AutomorphismGroup+ to yield errors in certain cases, for which we compute the group structure directly from the output of \verb+Isomorphisms+.
    \item For two function fields, the function \verb+IsIsomorphic+ sometimes returns False even when the two fields are isomorphic. We instead test whether \verb+Isomorphisms+ returns a nonempty list.
\end{itemize}
\section{Consistency checks}
\label{sec:completeness}

The proof of Theorem~\ref{T:main} implicitly depends on the correctness both of the relevant features of the underlying computational systems (SageMath and Magma) and of our implementation of the search strategy described above. It is thus highly desirable to perform some logically independent consistency checks of the resulting data. We describe several such checks here.

\subsection{Point counting on \texorpdfstring{$\M_6$}{M6}}

We first verify the numerical assertion \eqref{eq:full count over F2}.
By \cite[Corollary~1.6]{CanningLarson-Mg}, 
there exists a monic polynomial $P(T) \in \Z[T]$ of degree 15 such that $\#\M_6(\FF_q) = P(q)$ for every prime power $q$.
On account of the Lefschetz trace formula for Deligne--Mumford stacks \cite[Theorem~3.1.2]{Behrend},
it is a feasible but challenging computation to extract the exact polynomial by computing in the tautological ring of $\M_6$
as indicated (and implemented) in \cite{admcycles}.
\begin{thm} \label{thm:genus 6 point count}
For every prime power $q$, we have
\[
\#\M_6(\FF_q) = 
q^{15}+q^{14}+2q^{13}+q^{12}-q^{10}+q^3-1.
\]
In particular, $\#\M_6(\FF_2)= 68615$ as asserted in Theorem~\ref{T:main}.
\end{thm}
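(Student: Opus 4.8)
The plan is to obtain the polynomial cohomologically and then read off its value at $q=2$, the latter being a one-line arithmetic verification. The input I would take for granted is \cite[Corollary~1.6]{CanningLarson-Mg}, which already asserts that $\#\M_6(\FF_q)$ agrees with a monic integer polynomial of degree $15 = \dim \M_6$ for all prime powers $q$; the content is therefore not polynomiality itself but the determination of the coefficients. What makes this accessible is the further fact, also furnished by \cite{CanningLarson-Mg}, that the rational cohomology of $\M_6$ is entirely tautological. In particular every class is algebraic, so the associated $\ell$-adic Galois representations are of Tate type: a class of cohomological degree $2j$ arising from codimension $j$ has Frobenius acting by $q^j$, and there are no odd-degree or non-Tate contributions to worry about. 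This purity-of-type hypothesis is exactly what guarantees that the point count is governed by dimension data alone and holds uniformly in $q$.

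Granting this, each compactly supported group $H^i_c(\M_{6,\overline{\FF}_q},\Q_\ell)$ decomposes into Tate twists $\Q_\ell(-j)$, and the Lefschetz trace formula for Deligne--Mumford stacks \cite[Theorem~3.1.2]{Behrend} collapses to
\[
\#\M_6(\FF_q) = \sum_{i,j} (-1)^i\, m_{i,j}\, q^j, \qquad m_{i,j} := \dim_{\Q_\ell} \operatorname{Hom}(\Q_\ell(-j), H^i_c),
\]
so that the coefficient of $q^j$ in $P$ is the weight-graded compactly supported Euler characteristic $\sum_i (-1)^i m_{i,j}$. The top term $H^{30}_c \cong \Q_\ell(-15)$ accounts for monicity, and the whole theorem reduces to computing these virtual weight multiplicities, i.e. the coefficients of the $E$-polynomial of $\M_6$.

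To compute them I would transfer the problem from the open stack $\M_6$ to its Deligne--Mumford compactification $\overline{\M}_6$. Stratifying $\overline{\M}_6$ by stable graphs, whose open strata are finite quotients of products of smaller moduli spaces $\M_{g_v,n_v}$, one expresses the compactly supported Euler characteristic of $\M_6$ by inclusion--exclusion over the boundary, bootstrapping from the (known) Euler characteristics of the lower-genus moduli spaces $\M_{g_v,n_v}$ appearing on the boundary together with the tautological ring of $\overline{\M}_6$ itself. Each stratum class is assembled from $\kappa$- and $\psi$-classes and boundary pushforwards; under the hypothesis that all cohomology is tautological, the dimensions of the graded pieces of the tautological ring pin down the $m_{i,j}$ with no residual ambiguity. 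This is precisely the computation implemented in \texttt{admcycles} \cite{admcycles}, so in practice the proof amounts to correctly setting up and running that package.

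The main obstacle is the scale and bookkeeping of the tautological computation rather than any conceptual gap: one must enumerate every boundary stratum of a $15$-dimensional stack, handle the correction terms linking compact and open cohomology, and track weights carefully enough that no Tate twist is misplaced, all while relying on \texttt{admcycles} to evaluate high-codimension tautological relations faithfully. Because this derivation is logically independent of the explicit enumeration of \S\ref{sec:tabulation}, the final arithmetic $P(2) = 2^{15} + 2^{14} + 2\cdot 2^{13} + 2^{12} - 2^{10} + 2^3 - 1 = 68615$ provides exactly the cross-check against \eqref{eq:full count over F2} that we are after.
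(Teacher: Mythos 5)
Your proposal matches the paper's approach: the paper's proof is simply a citation to \cite{BergstromEtAl}, and the discussion immediately preceding the theorem frames that computation exactly as you do --- polynomiality from \cite[Corollary~1.6]{CanningLarson-Mg}, the Lefschetz trace formula for Deligne--Mumford stacks \cite[Theorem~3.1.2]{Behrend} applied to purely tautological (hence Tate-type) cohomology, and extraction of the coefficients via the tautological ring as implemented in \cite{admcycles}. Your sketch of the stable-graph stratification and weight bookkeeping is a reasonable outline of what the cited work carries out, so you have reconstructed the same route the paper takes.
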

\begin{proof}
See  \cite{BergstromEtAl}.
\end{proof}

Given Theorem~\ref{thm:genus 6 point count}, one can give an alternate proof of Theorem~\ref{T:main} by independently checking the following two concrete assertions.
\begin{itemize}
    \item 
    For each tabulated curve $C$, the order of $\#\mathrm{Aut}(C)$ is no greater than the reported value.
    \item
    No two of the tabulated curves lying in the same stratum are isomorphic. (For an extra consistency check, we tested this in Magma also for pairs of curves not lying in the same stratum.)
\end{itemize}
Given these assertions, one may then directly verify from our data that $\#\M_6(\FF_q) \geq 68615$ with equality if and only if our census is complete. Combining with Theorem~\ref{thm:genus 6 point count} then yields Theorem~\ref{T:main}.

\subsection{Point counts with marked points}

As noted earlier, given Theorem~\ref{T:main} one can count the $\FF_2$-points of any moduli stack corresponding to genus 6 curves with some additional marked structure, as in Corollary~\ref{cor:count points on marked}. This count will always yield an integer thanks to the following fact.

\begin{lem}
    Let $\mathcal{X}$ be a Deligne--Mumford stack over a finite field $\FF_q$ admitting a coarse moduli space $X$. Then $\#\mathcal{X}(\FF_q) = \#X(\FF_q)$.
\end{lem}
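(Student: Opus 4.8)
\section*{Proof proposal}

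The plan is to fiber the groupoid $\mathcal{X}(\FF_q)$ over the set $X(\FF_q)$ by means of the coarse moduli map $\pi\colon \mathcal{X}\to X$, to show that each fiber is nonempty, and to prove that the weighted count of each fiber is exactly $1$. Summing over $X(\FF_q)$ then yields the identity. First I would record the two standard facts about the coarse space: by the Keel--Mori theorem, $\pi$ induces a $\Gal(\overline{\FF_q}/\FF_q)$-equivariant bijection on geometric points $|\mathcal{X}(\overline{\FF_q})|\xrightarrow{\sim} X(\overline{\FF_q})$, and $X(\FF_q)$ is the Frobenius-fixed locus inside $X(\overline{\FF_q})$. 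Consequently every object of $\mathcal{X}(\FF_q)$ maps into $X(\FF_q)$, so that
\[
\#\mathcal{X}(\FF_q) = \sum_{[\eta]\in|\mathcal{X}(\FF_q)|}\frac{1}{\#\mathrm{Aut}(\eta)} = \sum_{x\in X(\FF_q)}\;\sum_{\pi([\eta])=x}\frac{1}{\#\mathrm{Aut}(\eta)}.
\]
It therefore suffices to show that the inner sum equals $1$ for each $x$.

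Fix $x\in X(\FF_q)$ and consider its residual gerbe $\mathcal{G}_x\hookrightarrow\mathcal{X}$, a gerbe over $\FF_q$ banded by the automorphism group of the corresponding geometric object. Since $\mathcal{X}$ is Deligne--Mumford its diagonal is unramified, so this band is a finite \'etale group scheme, i.e.\ a finite group $\Gamma$ carrying a continuous action of $\Gal(\overline{\FF_q}/\FF_q)=\hat{\Z}$, and the objects of $\mathcal{X}(\FF_q)$ lying over $x$ are precisely the $\FF_q$-points of $\mathcal{G}_x$. The crux of the argument is nonemptiness. Here I would invoke Lang's theorem---equivalently, the fact that $\FF_q$ has cohomological dimension $1$---to conclude that a gerbe over $\FF_q$ banded by a finite \'etale group scheme is neutral, and hence admits a section $\eta_0\in\mathcal{G}_x(\FF_q)$. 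This is the single place where finiteness of the base field is indispensable: over a general field the map $|\mathcal{X}(k)|\to X(k)$ need not be surjective, since the residual gerbe may fail to be neutral.

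With a base object $\eta_0$ in hand, the remaining objects over $x$ are its twists, classified up to isomorphism by $H^1(\hat{\Z},\Gamma)$; the twist $\eta_c$ attached to a class $c$ has automorphism group the fixed subgroup $\Gamma^{F_c}$ under the $c$-twisted Frobenius $F_c$. I would then finish with the mass formula
\[
\sum_{c\in H^1(\hat{\Z},\Gamma)}\frac{1}{\#\Gamma^{F_c}} = 1,
\]
whose justification is pure finite-group theory: $H^1(\hat{\Z},\Gamma)$ is identified with the set of $F$-conjugacy classes in $\Gamma$, i.e.\ the orbits of the action $g\cdot a = g\,a\,F(g)^{-1}$, the stabilizer of $a$ is exactly $\Gamma^{F_a}$, and orbit--stabilizer gives each orbit size $\#\Gamma/\#\Gamma^{F_a}$. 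As the orbits partition $\Gamma$, dividing by $\#\Gamma$ and summing over classes yields $1$. Combining, each inner sum is $1$, so the displayed double sum collapses to $\#\mathcal{X}(\FF_q)=\#X(\FF_q)$. The main obstacle is the nonemptiness step; the mass-formula step is routine.
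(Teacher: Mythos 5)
Your argument is correct in substance, but the comparison with the paper is lopsided: the paper gives no proof at all, disposing of the lemma by citation to \cite[Proposition 1.3(3)]{BergstromFaberPayne}. What you have written out is, in essence, the standard proof of that cited result: the coarse map identifies geometric points of $\mathcal{X}$ and $X$ Frobenius-equivariantly; the objects of $\mathcal{X}(\FF_q)$ over a fixed $x \in X(\FF_q)$ are the $\FF_q$-points of the residual gerbe $\mathcal{G}_x$; neutrality of that gerbe supplies a base object $\eta_0$; and the twisting mass formula $\sum_{c \in H^1(\hat{\Z},\Gamma)} 1/\#\Gamma^{F_c} = 1$, proved exactly as you do via $F$-conjugacy classes and orbit--stabilizer, collapses each fiber's weighted count to $1$. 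You also correctly isolate where finiteness of the base field enters (nonemptiness of the fibers), which is the content that makes the lemma fail over general fields. So where the paper outsources the proof, you have supplied it, and the structure of your argument is sound.

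Two attributions in your nonemptiness step deserve repair. First, Lang's theorem concerns torsors under \emph{connected} algebraic groups and does not by itself neutralize a gerbe banded by a finite \'etale group scheme. The fact you actually need is that $\Gal(\overline{\FF_q}/\FF_q) \cong \hat{\Z}$ is a \emph{projective} (free) profinite group: a gerbe with finite band corresponds to an extension $1 \to \Gamma \to E \to \hat{\Z} \to 1$ of profinite groups, and projectivity splits it, neutralizing the gerbe; in the abelian case this is precisely your parenthetical appeal to $\mathrm{cd}(\hat{\Z}) = 1$, i.e.\ $H^2(\hat{\Z},\Gamma)=0$, but in the nonabelian case the band is only an outer action, so neutrality should be phrased via nonabelian $H^2$ or the extension picture rather than an honest Galois action on $\Gamma$. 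This is harmless for the rest of your proof, since after choosing $\eta_0$ you work with the genuine automorphism group scheme $\mathrm{Aut}(\eta_0)$, to which the twisting formalism applies verbatim. Second, finiteness of $\Gamma$ comes from the finite-inertia hypothesis implicit in invoking Keel--Mori (which is also what guarantees the coarse space exists as a separated algebraic space, for which your identification of $X(\FF_q)$ with Frobenius-fixed geometric points is valid), not from the Deligne--Mumford property alone, which only gives unramifiedness, hence \'etaleness, of the automorphism groups. With these citations adjusted, your proof is complete and could stand in place of the paper's reference.
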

\begin{proof}
    See \cite[Proposition 1.3(3)]{BergstromFaberPayne}.
\end{proof}

\subsection{Point counts in strata}

Point counts of some strata of $\M_6$ are also known, and can be used to check the corresponding sections of our table. See Table~\ref{table:point counts} for a summary of this discussion.

\begin{itemize}
    \item 
    For hyperelliptic curves, it is straightforward to compute that
\[
\#\mathcal{H}_6(\mathbb{F}_q) = q^{11};
\]
e.g., see  \cite{Bergstrom} for much stronger results.

\item 
For plane quintics, Gorinov \cite{Gorinov} showed that $\mathcal{Q}_6$ has trivial rational cohomology, yielding
\[
\#\mathcal{Q}_6(\mathbb{F}_q) = q^{12}.
\]
This has been rederived by elementary means by Wennink \cite{Wennink}.

\end{itemize}

We are not aware of any prior computation of  $\#\mathcal{T}_{6,n}(\FF_q)$. Comparing the values for $q=2$ with Zheng's results on the stable cohomology of $\mathcal{T}_{g,n}$ \cite{Zheng} suggests that
\[
\#\mathcal{T}_{6,0}(\FF_q) \approx q^{13} - q^{10}, \qquad
\#\mathcal{T}_{6,2}(\FF_q) \approx q^{12} + q^{11}.
\]
For $\#\mathcal{B}_6$, we have the following result for odd primes that does not appear to have been reported previously, but which does not yield a correct prediction for $q=2$ (see below).

\begin{prop} \label{P:bielliptic count}
For $6 \leq g \leq 11$, for every odd prime $q$,
\begin{equation} \label{eq:bielliptic count}
\#\mathcal{B}_g(\FF_q) = 
\frac{q^{2g}- q^{2g-4} - q^{2g-5} + (-1)^{g+1} q}{q^2+1}.
\end{equation}
\end{prop}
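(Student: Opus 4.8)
The plan is to parametrize bielliptic curves by their double-cover data, reduce the count to a weighted sum over the moduli of elliptic curves, and evaluate that sum with the Eichler--Selberg trace formula. Since $q$ is odd, the bielliptic involution $\sigma$ on a genus $g$ bielliptic curve $C$ is tamely ramified, so Riemann--Hurwitz applied to $\pi\colon C\to E=C/\sigma$ gives a reduced branch divisor of degree $2g-2$ on the genus $1$ quotient $E$. Such a cover is the same datum as a line bundle $L\in\Pic^{g-1}(E)$ together with a nonzero section $s\in H^0(E,L^{\otimes 2})$ whose divisor $D=\div(s)$ is reduced, with $C=\operatorname{Spec}(\O_E\oplus L^{-1})$ and algebra structure given by $s$. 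I would first note that for $g$ in the stated range the bielliptic involution is unique (a classical fact going back to Accola), hence characteristic in $\operatorname{Aut}(C)$; thus the stack $\widetilde{\mathcal{B}}_g$ of pairs $(C,\sigma)$ is isomorphic to $\mathcal{B}_g$, and it suffices to count $\widetilde{\mathcal{B}}_g$.

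Next I would set up the groupoid count as a fibration $\widetilde{\mathcal{B}}_g\to\mathcal{M}_1$ sending $(C,\sigma)$ to $E$, where $\mathcal{M}_1$ is the stack of genus $1$ curves; note that $E$ carries no canonical basepoint, so translations must be retained as automorphisms. Orbit--stabilizer bookkeeping gives
\[
\#\widetilde{\mathcal{B}}_g(\FF_q)=\sum_{[E]}\frac{M(E)}{\#\operatorname{Aut}(E)},\qquad M(E)=\#\{(L,D):L\in\Pic^{g-1}(E)(\FF_q),\ \O(D)\cong L^{\otimes 2},\ D\text{ reduced}\}.
\]
The two subtle factors of $2$ that must be tracked here cancel: the involution $\sigma$ contributes a central $\Z/2$ to every automorphism group, while rescaling $s$ by a nonsquare of $\FF_q^{\times}$ yields a genuinely nonisomorphic quadratic twist with the same branch locus, doubling the number of covers lying over a fixed $(L,D)$.

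The core of the argument is the evaluation of $M(E)$. Reorganizing by the branch divisor, $M(E)=\sum_{D}\#\{L:L^{\otimes 2}\cong\O(D)\}$, where the inner count equals $\#E[2](\FF_q)$ when $[\O(D)]$ lies in the image $\Sigma\subseteq\Pic^{2g-2}(E)(\FF_q)$ of the squaring map from $\Pic^{g-1}(E)(\FF_q)$, and is $0$ otherwise. I would detect the condition $[\O(D)]\in\Sigma$ using the order-$2$ characters $\chi$ of $\Pic^0(E)(\FF_q)/2\Pic^0(E)(\FF_q)\cong E[2](\FF_q)$, turning $M(E)$ into a combination of twisted squarefree-divisor counts. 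Each such count is read off from the generating identity $\sum_d t^d\sum_{D\text{ reduced of degree }d}\chi([\O(D)])=Z_{E,\chi}(t)/Z_{E,\chi^2}(t^2)$, so $M(E)$ becomes an explicit rational expression in $q$ and the Frobenius eigenvalues of $E$ and of the quadratic twists selected by the $\chi$.

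The last step is to sum over $E$ using the Eichler--Selberg trace formula, equivalently the Lefschetz trace formula for $\mathcal{M}_{1,1}$ with coefficients in the symmetric powers $\operatorname{Sym}^k V$ of the standard weight-one local system $V$, as produced by expanding the zeta quotients above; the squarefree denominator $Z_{E,\chi^2}(t^2)$ is responsible for the factor $q^2+1$ in the answer, and the sign $(-1)^{g+1}$ records the parity of a Frobenius eigenvalue raised to a power growing with $g$. I expect this summation to be the main obstacle, on two counts. First, one must match each character $\chi$ with the correct quadratic twist so that the twisted traces assemble coherently. Second, and more seriously, collapsing the sum to the clean polynomial stated requires that only the Eisenstein (tautological) part of $H^{\ast}(\mathcal{M}_{1,1},\operatorname{Sym}^k V)$ contribute; this holds exactly when the weights $k$ that occur stay below the first level-$1$ cusp form (weight $12$), and it is precisely this vanishing that confines the clean formula to $6\le g\le 11$. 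In characteristic $2$ the very first step collapses, since $\pi$ is then wildly ramified, which is consistent with the failure of the formula at $q=2$.
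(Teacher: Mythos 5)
Your overall architecture coincides with the paper's: the same double-cover dictionary (reduced branch divisor $D$ of degree $2g-2$ plus a square root $\mathcal{L}$ of $\mathcal{O}(D)$), the same appeal to uniqueness of the bielliptic involution (the paper derives it from Castelnuovo--Severi), the same cancellation of the two factors of $2$ (central involution against the relative quadratic twist), the same groupoid bookkeeping --- your fibration over $\mathcal{M}_1$ retaining translations is exactly the paper's weight $1/(q-a(E)+1)=1/\#E(\FF_q)$ --- and the same explanation of the range $6 \leq g \leq 11$: the paper uses Birch's level-one moments $\int a(E)^{2n}$ only for $2n \leq 8$, i.e.\ weights below the first cusp form $\Delta$, matching your ``below weight 12'' criterion.

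The gap is at your evaluation of $M(E)$ by order-$2$ characters, in two respects. First, you misidentify the twisted $L$-functions: a nontrivial unramified quadratic character $\chi$ of $\Pic^0(E)(\FF_q)$ corresponds to a connected \'etale double cover of $E$, hence to a curve $2$-isogenous to $E$ --- not to a quadratic twist. By Weil's degree formula ($\deg L = 2g_E-2+\deg\mathfrak{f} = 0$ here, equivalently because $2$-isogenous curves share a zeta function) one gets $L(\chi,t)=1$, so the twisted generating series is simply $1/Z(E,t^2)$ and introduces no new Frobenius eigenvalues; the ``matching of $\chi$ with quadratic twists'' that you flag as a main step would fail as stated. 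Second, and more seriously, the nontrivial characters then contribute terms weighted by $\#E[2](\FF_q)-1$, and summing $(\#E[2](\FF_q)-1)$ times a Chebyshev-type polynomial in $a(E)$ over $\M_{1,1}(\FF_q)$ is a level-two ($\Gamma_0(2)$-type) trace-formula quantity: Birch's level-one moments --- the only summation input your plan invokes --- cannot evaluate it, and you give no argument that these terms cancel. The paper never opens this door: it works only with the untwisted count, taking the per-$E$ weighted number of covers to be $d_{2g-2}(E)$ (the torsor factor $\#E[2](\FF_q)$ set against the coset condition on the class of $D$), so that the single series $Z(E,T)/Z(E,T^2)$ suffices, its factor $q+1-a(E)$ cancels the translation weight, and only Birch's moments are needed; note also that the factor $q^2+1$ arises there from the final partial-fraction step over $(1+T)(1-q^2T)$, not from the $Z(t^2)$ denominator as you surmise. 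So while your strategy tracks the paper's closely, the one place you deviate introduces extra terms that you neither evaluate correctly nor show how to remove, and your plan as written does not close.
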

\begin{proof}
For $E$ an elliptic curve over $\mathbb{F}_q$, let $E^\circ$ denote the set of closed points of $E$ (of arbitrary degree)
and let $a(E) := q+1-\#E(\FF_q)$ be the trace of Frobenius of $E$.
For $n \geq 0$, let $d_n(E)$ denote the number of effective squarefree divisors of degree $n$ on $E$. We compute the generating series for $d_n(E)$ by writing
\begin{align*}
\sum_{n=0}^\infty d_n(E) T^n &= \prod_{x \in E^\circ} (1 + T^{\deg(x)}) = \prod_{x \in E^\circ} \frac{1-T^{2\deg(x)}}{1 - T^{\deg(x)}} \\
&= \frac{Z(X,T)}{Z(X,T^2)} = \frac{(1-T^2)(1-qT^2)(1-a(E)T+qT^2)}{(1-T)(1-qT)(1-a(E)T^2+qT^4)} \\
&= 1 + (q-a(E)+1)T \frac{1 - qT^3}{(1-qT)(1-a(E)T^2+qT^4)}.
\end{align*}

For any bielliptic curve $C$ of genus $g \geq 6$, by Castelnuovo--Severi the map from $C$ to its genus-1 quotient is unique up to composition by an automorphism of the target. In particular, the bielliptic involution $\iota$ of $C$ and central in $\mathrm{Aut}(C)$.

For a given elliptic curve $E$ (which as usual has a marked point $O$) and a given $g$, every bielliptic covering $C \to E$ of genus $g$ gives rise to a pair $(D, \mathcal{L})$ in which $D$ is an effective squarefree divisor (the branch locus) and $\mathcal{L}$ is a square root of the line bundle $\mathcal{O}(D)$. In particular, such a pair can only exist if the sum over $D$ yields an element of $2E(\FF_q)$; when this condition does hold, the square roots of $\mathcal{O}(D)$ form a torsor for the group $E(\FF_q)[2]$. Moreover, the bielliptic covering is determined by the pair up to a relative quadratic twist.

Putting this together, if we view $\M_{1,1}(\FF_q)$
as a measure space by weighting the isomorphism class of $E$ by $\frac{1}{\#\mathrm{Aut}(E)}$, then
\begin{align*}
\#\mathcal{B}_g(\mathbb{F}_q) &= 
\int_{\M_{1,1}(\FF_q)} \frac{d_{2g-2}(E)}{q-a(E)+1} \\
&= \int_{\M_{1,1}(\FF_q)} 
[T^{2g-3}] \frac{1 - qT^3}{(1-qT)(1-a(E)T^2+qT^4)},
\end{align*}
Since we are only extracting odd coefficients, we may rewrite this as
\begin{align*}
\#\mathcal{B}_g(\mathbb{F}_q) &= \frac{1}{2} \int_{\M_{1,1}(\FF_q)} 
[T^{2g-3}] \left( \frac{1 - qT^3}{1-qT} -\frac{1 + qT^3}{1+qT} \right) \frac{1}{1-a(E)T^2+qT^4} \\
&= q \int_{\M_{1,1}(\FF_q)} 
[T^{2g-3}] T \frac{1-T^2}{(1-q^2T^2)(1-a(E)T^2+qT^4)} \\
&= q [T^{g-2}]\int_{\M_{1,1}(\FF_q)}  \frac{1-T}{(1-q^2T)(1-a(E)T+qT^2)}.
\end{align*}
To evaluate the integral, we first recall that $\M_{1,1}(\FF_q)$ has total measure $q$. We next recall that elliptic curves over $\FF_q$ come in quadratic twist pairs whose Frobenius traces differ by a sign, so $\int_{\M_{1,1}(\FF_q)} a(E)^{2n+1} = 0$ for all $n \geq 0$.
We finally invoke a result of Birch \cite{Birch}: for $q$ an odd prime,
\begin{align*}
\int_{\M_{1,1}(\FF_q)}
a(E)^2 &= q^2 - 1 \\
\int_{\M_{1,1}(\FF_q)} 
a(E)^4 &= 2q^3 - 3q - 1 \\
\int_{\M_{1,1}(\FF_q)} 
a(E)^6 &= 5q^4 - 9q^2 - 5q - 1 \\
\int_{\M_{1,1}(\FF_q)} 
a(E)^8 &= 14q^5 - 28q^3 - 20q^2 - 7q - 1.
\end{align*}
This yields
\[
\int_{\M_{1,1}(\FF_q)} \frac{1}{1 - a(E)T + qT^2}\equiv 
q - T^2 - T^4 - T^6 - T^8 \pmod{T^{10}};
\]
hence for $6 \leq g \leq 11$,
\begin{align*}
\mathcal{B}_g(\FF_q) &= 
q [T^{g-2}] \frac{1-T}{1-q^2 T} \left( q - \frac{T^2}{1-T^2} \right) \\
&= q [T^{g-2}] \frac{q-(q+1)T^2}{(1+T)(1-q^2 T)} \\
&= \frac{q}{q^2+1} [T^{g-1}]  \left( \frac{q-(q+1)T^2}{1-q^2 T} - \frac{q-(q+1)T^2}{1+T} \right) \\
&= \frac{q}{q^2+1}(q q^{2g-2} - (q+1)q^{2g-6}
- q(-1)^{g-1} + (q+1) (-1)^{g-3})
\end{align*}
which simplifies to the stated expression.
\end{proof}

\begin{rmk}
One can extend Proposition~\ref{P:bielliptic count} to odd prime powers using Ihara's extension of Birch's formulas; see \cite[Theorem~2]{Kaplan-Petrow} for a compact statement.

In characteristic $2$, while the Birch--Ihara formula remains valid (e.g., because $\overline{\M}_{1,2g-2}$ is smooth over $\Z$), the description of double covers via Kummer theory does not. Moreover, the formula \eqref{eq:bielliptic count} does not hold for $q=2$: it predicts $\#\mathcal{B}_6(\FF_2)=742$, which is off by $2$ from the correct count.
\end{rmk}

\begin{rmk}
It is also shown in \cite{Birch} that  $\int_{M_{1,1}(\FF_q)} a(E)^{10}$ includes a nonzero contribution from the $\Delta$ modular form, and so $\#\mathcal{B}_{12}(\FF_q)$ is not a polynomial in $q$.
This loosely corresponds to the fact that the bielliptic locus of $\M_g$ has only tautological cycle classes for $g \leq 11$ \cite{CanningLarson-bielliptic} but not for $g=12$ \cite{vanZelm}.
\end{rmk}

\begin{table}
\caption{Point counts (unweighted and weighted) over $\FF_2$ of the various strata of $\M_6$. Of the formulas over $\FF_q$,  only those not ending in $\cdots$ are proven.}
\label{table:point counts}
\begin{tabular}{cccccccccccc}
Stratum & Unweighted & Weighted & \multicolumn{8}{c}{Weighted count over $\FF_q$ (empirical)}  \\
\hline
$\mathcal{H}_6$ & 4134 & 2048 & &&&&$q^{11}$\\
$\mathcal{B}_6$ & 1530 & 744 & &&&&& $q^{10}$ & $ - q^8$ & $+ \cdots$ \\
$\mathcal{Q}_6$ & 4204 & 4096 & &&& $q^{12}$\\
$\mathcal{T}_{6,0}$ & 7282 & 7166 & && $q^{13}$ &&& $ - q^{10}$ &&$ + \cdots$ \\
$\mathcal{T}_{6,2}$ & 6181 & 6148 &&&&$q^{12}$ &$ + q^{11}$ & && $+\cdots$\\
$\M_6^{\mathrm{BN}}$ & 48896 & 48413 & $q^{15}$ & $ + q^{14}$ & $+ q^{13}$ & $- q^{12}$ & $ -2 q^{11}$  &$- q^{10}$ & $+q^8$ & $+ \cdots$\\
\hline
$\M_6$ & 72227 & 68615 & $q^{15}$ & $+ q^{14}$ & $+ 2q^{13}$ & $ + q^{12}$ & & $ - q^{10} $ &&  $+ q^3-1$
\end{tabular}
\end{table}

\section*{Acknowledgments}
Thanks to Samir Canning for discussions about \cite{BergstromEtAl} and to David Roe for importing our data into LMFDB.

\bibliographystyle{amsalpha}
\bibliography{source.bib}
\end{document}